\theoremstyle{plain}
\newtheorem{theorem}{Theorem}[section]
\newtheorem{lemma}[theorem]{Lemma}
\newtheorem{proposition}[theorem]{Proposition}
\newtheorem{corollary}[theorem]{Corollary}
\theoremstyle{definition}
\newtheorem{definition}[theorem]{Definition}
\newtheorem{example}[theorem]{Example}
\newtheorem{conjecture}[theorem]{Conjecture}
\theoremstyle{remark}
\newtheorem{remark}[theorem]{Remark}
\newcommand{\C}{\mathbf{C}}
\renewcommand{\O}{\mathcal{O}}
\renewcommand{\P}{\mathbf{P}}
\newcommand{\Q}{\mathbf{Q}}
\newcommand{\Z}{\mathbf{Z}}
\newcommand{\sco}{\mathcal{O}}
\renewcommand{\t}{\sigma}
\newcommand{\gal}{\mathrm{Gal}}
\newcommand{\per}{\mathrm{Per}}
\newcommand{\gdr}{Galois--dynamics correspondence}
\newcommand{\set}[1]{\left\{#1\right\}}
\newcommand\restr[2]{{
  \left.\kern-\nulldelimiterspace 
  #1 
  \right|_{#2} 
  }}
\begin{document}

\title{A Galois--dynamics correspondence for unicritical polynomials}

\author{Robin Zhang}
\address[Robin Zhang]{Department of Mathematics, Columbia University}
\email{rzhang@math.columbia.edu}

\date{May 18, 2021}

\begin{abstract}
	In an analogy with the Galois homothety property
	for torsion points of abelian varieties that was
	used in the proof of the Mordell--Lang conjecture,
	we describe a correspondence between the action of a 
	Galois group and the dynamical action of a rational map.
	For nonlinear polynomials with rational coefficients,
	the irreducibility of the associated
	dynatomic polynomial serves as a convenient criterion, although we
	also verify that the correspondence occurs in several cases
	when the dynatomic polynomial is reducible.
	The work of Morton, Morton--Patel, and Vivaldi--Hatjispyros
	in the early 1990s
	connected the irreducibility and Galois-theoretic properties
	of dynatomic polynomials to rational periodic
	points;
	from the Galois--dynamics
	correspondence, we derive similar consequences
	for quadratic periodic points of unicritical polynomials.
	This is sufficient to deduce the
	non-existence of quadratic periodic points of quadratic polynomials
	with exact period
	$5$ and $6$, outside of a specified finite set from Morton
	and Krumm's work in explicit Hilbert irreducibility.
\end{abstract}

\maketitle

\section{Introduction}
\label{sec:intro}

\subsection{A general definition}
\label{subsec:def}
In arithmetic geometry, it is natural to consider
the action of Galois groups on torsion points of
abelian varieties.
The proof of the Mordell--Lang conjecture crucially
uses the fact that torsion points of abelian varieties always satisfy
the following Galois homothety property (cf.
\cite{lang-division},
\cite[Th\'{e}or\`{e}me~2]{serre-cours},
\cite[Lemme~12]{hindry},
\cite[Theorem~2.1.6]{mcquillan},
\cite[Section~1]{poonen-mordell-lang}):
for an abelian variety $A$ defined over a number field $k$,
there exists an $i \geq 1$ such that for all positive integers $m$,
there is a $\sigma_m \in \gal(\overline{k}/k)$ such that
\[
	\sigma_m(x) = m^i x
\]
for all $\overline{k}$-rational points $x$ on $A$ of finite order
coprime to $m$.

For arithmetic dynamical systems, we introduce an analogous
correspondence with two goals in mind. First, we establish a
``dynamical Galois homothety property'', designated the
\textit{\gdr}, as a weaker and slightly more prevalent
condition than the irreducibility of
dynatomic polynomials studied in the work of
Vivaldi--Hatjispyros~\cite{vh}. 
Second, we explore the connections between the {\gdr}
and quadratic periodic points.

Let $\phi:\P^1 \rightarrow \P^1$ be a rational map over a
field $k$ and let $\phi^N$
denote the $N$-th iterate of $\phi$, so $\phi^N := \phi
\circ \phi^{N - 1}$. For $S$ a subset of an algebraic closure
$\overline{k}$ of $k$, let $\per_{N,S}(\phi)$ denote the periodic
points in $S$ of $\phi$ of exact period $N$.

\begin{definition}
\label{def:A}
	Let $\phi:\P^1 \rightarrow \P^1$ be a rational map over a
	field $k$, $N$ be an integer greater than $1$, and
	$K/k$ be a nontrivial Galois extension.
	For a periodic point $z \in \per_{N,K-k}(\phi)$,
	the tuple $(\phi, N, K/k, z)$ satisfies the
	\textit{{\gdr}} (GDC) if and only if
	there is a positive integer $i < N$ and a nontrivial
	$\sigma \in	\gal(K/k)$ such that
	\[
		\sigma (z) = \phi^i(z).
	\]
	\noindent
	Furthermore, we say that the triple $(\phi, N, K/k)$ satisfies the
	{\gdr} if and only if $(\phi, N, K/k, z)$ satisfies the {\gdr} for
	every $z \in \per_{N,K-k}(\phi)$.
\end{definition}

Unlike the Galois homothety property, which always holds in the
setting of abelian varieties, the {\gdr} is not always satisfied
for dynamical systems.
Even if we restrict our attention to unicritical polynomials,
which can be written as $\phi_{d, c}(z) := z^d + c$,
there are instances in which the {\gdr} might
not be satisfied.

\begin{example}
\label{ex:fail}
	If $K$ is the splitting field of $\phi_{2, t}^3(z) - z$ over the
	rational function field $k=\C(t)$, then the directed graph
	of periodic points of $\phi_{2, t}$ of exact period
	$N = 3$ consists of two disjoint $3$-cycles. A result of
	Bousch~\cite[Chapter 3, Theorem 3]{bousch} implies that the
	Galois group $\gal\big(K/\C(t)\big)$ properly contains the full
	automorphism group of this graph and, in particular, contains an
	element $\sigma$ of order $2$ that interchanges the two $3$-cycles.
	If $k$ is the fixed field $K^\sigma$, then $[K:k] = 2$. For
	any $z \in \per_{N,K-k}(\phi)$ of exact period $3$, the
	$\gal(K/k)$-orbit of $z$ is $\{z, \sigma (z)\}$, but $\sigma (z)$
	is not in the forward orbit of $z$ with respect to $\phi_{2, t}$.
	Hence $(\phi_{2, t}, 3, K/K^\sigma)$ does
	not satisfy the {\gdr}.
\end{example}

\begin{remark}
	The definition of the {\gdr} can be naturally generalized
	by allowing $\phi$
	to be any endomorphism on an algebraic variety.
	If $A$ is an abelian variety and $[m]$ denotes its
	multiplication-by-$m$ endomorphism, then
	the Galois homothety property due to Faltings and Serre
	(cf.~\cite[Theorem~2.1.6]{mcquillan}) implies that
	for all integers $m$ and $N$ greater than $1$, 
	$([m], N, \overline{k}/k)$ satisfies the {\gdr}.
\end{remark}

\begin{remark}
	\label{rem:disjoint}
	If $\phi$ is a polynomial with coefficients in $k$, then its iterative
	action commutes with the action of $\gal(K/k)$.
	In particular, $(\phi, N, K/k, z)$ satisfies the {\gdr}
	if and only if the $N$-cycle of $z$ is not disjoint from
	its conjugates by some element of the Galois group, i.e.
	\[
		\set{z, \ldots, \phi^{N-1}(z)} \cap \set{\sigma (z), \ldots, \sigma (\phi^{N-1}(z))}
			\neq \emptyset,
	\]
	for some nontrivial $\sigma \in \gal(K/k)$.
\end{remark}

\subsection{Irreducibility and the main theorem}
\label{subsec:intro-irred}
We will focus on the classical case when $k = \Q$ and
$\phi$ is a nonlinear polynomial in $\Q[z]$. First,
we recall the definition of dynatomic polynomials.
An $N$-periodic point $z$ of $\phi$
is always a root of the polynomial $\phi^n(z) - z$ for all
multiples $n$ of $N$. By the M\"obius inversion formula, we have
a factorization of $\phi^n(z) - z$ in terms of the $N$-th
\emph{dynatomic polynomial} $\Phi_N(z)$,
\begin{align*}
	\phi^n(z) - z &= \prod_{N|n} \Phi_N(z) \in \Q[z] \\
	\Phi_N(z) :&= \prod_{m|N}(\phi^m(z) - z)^{\mu(N/m)} \in
		\Q[z],
\end{align*}
where $\mu$ is the M\"{o}bius function.
The roots of $\Phi_N(z)$ are called the periodic points of
$\phi$ of formal period $N$, which generically are
the periodic points of $\phi$ of exact period $N$.
When we study the
family of unicritical polynomials, with $\phi = \phi_{d, t}$, we can
consider the dynatomic polynomial $\Phi_N(z, t)$
as an element of $\Q[z, t]$.
The zero locus of $\Phi_N(z)$ defines an affine curve
with a smooth projective model denoted $C_1(N)$
that carries an action of $\Z/N\Z$
induced by the iteration of $\phi$.
We can then define the quotient $C_0(N)$ of
$C_1(N)$ by this action.

The {\gdr} holds in the case considered by
Vivaldi--Hatjispyros~\cite{vh}, namely when $\phi$ is any
nonlinear polynomial in $\Q[z]$ and its associated dynatomic polynomial
$\Phi_N(z)$ is irreducible in $\Q[z]$.
In Section~\ref{subsec:crit}, we slightly modify
a result of Vivaldi--Hatjispyros
to obtain the following
irreducibility criterion for the {\gdr}.

\begin{proposition}[{Proposition~\ref{prop:vh2}}]
\label{prop:vh}
	Let $\phi$ be any nonlinear polynomial and let $N$ be an integer
	greater than $1$. If the dynatomic polynomial $\Phi_N(z)$
	is irreducible in $\Q[z]$ then
	$(\phi, N, K/\Q)$ satisfies the {\gdr} for all
	nontrivial Galois extensions $K/\Q$.
\end{proposition}

Therefore, the {\gdr} can be viewed as a weaker notion than the
irreducibility of $\Phi_N(z)$. Irreducibility is sufficient
to provide most of the cases of the {\gdr}, since it is often the
``typical'' situation.
For $\phi$ belonging to certain families of polynomials in $\Q[z]$,
including the unicritical polynomials $\phi_{d, c}$, the dynatomic
polynomial $\Phi_N(z, t)$ is known to be irreducible in $\C(t)[z]$
by the work of Bousch~\cite[Chapitre 3, Th\'{e}or\`{e}me 1]{bousch},
Lau--Schleicher~\cite[Theorem~4.1]{lau-schleicher},
and Morton~\cite[Theorem~B]{morton-irred}.
So for these families of polynomials, we apply
Hilbert's irreducibility theorem in Section~\ref{subsec:exceptional-set}
to show that $(\phi_{d, c}, N, K/\Q)$ satisfies the {\gdr} for all
Galois extensions $K/\Q$ so long as $c$ lies outside of a subset of
$\Q$ of asymptotic density $0$,
called the \textit{exceptional set} $\Sigma_{d, N}$.

The main theorem of this paper establishes cases of the {\gdr} for
quadratic polynomials with various $N$, even when
$\Phi_N$ is reducible and $c$ is in the exceptional set $\Sigma_{2, N}$.
Its proof in Section~\ref{sec:main-proof} combines the irreducibility
criterion of Proposition~\ref{prop:vh} with descriptions of the
exceptional sets $\Sigma_{2, N}$ due to Morton~\cite{morton-1992} and
Krumm~\cite{krumm-galois, krumm-finiteness} to establish the {\gdr}
when $c \in \Q - \Sigma_{2, N}$. For the exceptional
cases $c \in \Sigma_{2, N}$, an explicit understanding of how
$\Phi_N(z, c)$ factors for fixed specializations
$t = c$ is used to check the {\gdr} directly on a case-by-case basis.

\begin{theorem}
	\label{thm:gdr-d2}
	For the quadratic polynomial $\phi_{2,c}(z) = z^2 + c$ with rational
	coefficients and all nontrivial Galois extensions $K/\Q$,
	$(\phi_{2, c}, N, K/\Q)$
	satisfies the {\gdr} in the following cases:
	\begin{itemize}
		\item $N = 2$: all $c \in \Q$;
		\item $N = 3$: all $c \in \Q$;
		\item $N = 4$:
			\begin{itemize}
				\item $[K:\Q] = 2$: all $c \in \Q$;
				\item $[K:\Q] > 2$: all $c \notin
					\set{\frac{-s^3 - 2s + 4}{4s} \middle| s \in \Q^\times}$;
			\end{itemize}
		\item $N = 5, 6, 7$, or $9$: all but finitely many $c \in \Q$.
	\end{itemize}
\end{theorem}

\subsection{Quadratic points and periodic points}

Theorem~\ref{thm:gdr-d2} has a nice consequence in the
dynamics of quadratic polynomials over quadratic number fields.
For periods $5$ and $6$, no periodic points of $\phi_{2, c}$
are believed to exist in quadratic number fields besides
the single known $6$-cycle, as suggested by extensive numerical
evidence by Doyle--Faber--Krumm~\cite{doyle-faber-krumm},
Hutz--Ingram~\cite{hutz-ingram}, and Wang--Zhang~\cite[Section 5]{src},
in addition to the theoretical results of
Doyle~\cite{doyle-small, doyle-curves, doyle-cyclotomic}
and Krumm~\cite{krumm}.
\begin{conjecture}
	\label{con:n=5-6}
	Let $K$ be a quadratic number field.
	\begin{enumerate}[(a)]
		\item If $c \in \Q$, then $\phi_{2,c}$
			has no periodic points of exact period $5$ in $K$.
		\item If $c \in \Q - \{-\frac{71}{48}\}$,
			then $\phi_{2,c}$
			has no periodic points of exact period $6$ in $K$.
	\end{enumerate}
\end{conjecture}

Let $C_1(N)$
denote the dynatomic modular curve parametrizing pairs
$(\phi_{d, c}, z_0)$ of unicritical polynomials and
periodic points of period $N$, and let $C_0(N)$
denote the dynatomic modular curve parametrizing pairs
$(\phi_{d, c}, \sco)$
of unicritical polynomials and $N$-cycles. There is a pair
of projections
\[
	\begin{tikzcd}
		C_1(N) \arrow[r] & C_0(N) \arrow[r] & \P^1.
	\end{tikzcd}
\]
In Section~\ref{sec:rational-points},
we show that if $K$ is a quadratic number field
and $(\phi_{d, c}, N, K)$ satisfies the {\gdr},
then the fiber above $c \in \P^1(\Q)$
on $C_1(N)(K)$ maps to a point in $C_0(N)(\Q)$.
Therefore, if the points of $C_0(N)(\Q)$ are completely known
for a fixed $N$
and $(\phi_{d, c}, N, K)$ is known to satisfy the {\gdr} for all
$c \in \Q$, then one would be able to completely determine
the periodic points of $\phi_{d, c}$ in $K$ of period $N$.
In this fashion, the {\gdr} acts as a bridge between
the $\Q$-rational points of $C_0(N)$ and the $K$-rational
points of $C_1(N)$.
Since the points of $C_0(N)(\Q)$ are well-understood for
for $N = 5$ and $N = 6$ due to
Flynn--Poonen--Schaefer~\cite{n=5} and Stoll~\cite{n=6},
the possible counterexamples to Conjecture~\ref{con:n=5-6}
are constrained by the finite sets $\Sigma_{2, 5}$ and
$\Sigma_{2, 6}$.

\begin{corollary}
  \label{cor:n=5-zero-2}
	Let $K$ be a quadratic number field.
	If $c \in \Q - \Sigma_{2, 5}$, then
	$\phi_{2,c}$
	has no periodic points of exact period $5$ in $K$.
\end{corollary}
\begin{corollary}
	\label{cor:n=6-zero-2}
	Let $K$ be a quadratic number field and let
	$J$ be the Jacobian of $C_0(6)$.
	Assume that the $L$-series $L(J, s)$ extends to an entire function,
	$L(J, s)$ satisfies the standard functional equation, and the weak
	Birch and Swinnerton-Dyer conjecture is valid for $J$.
	If $c \in \Q - (\Sigma_{2, 6} \cup \{-\frac{71}{48}\})$, then
	$\phi_{2, c}$ has no periodic points of exact period $6$ in $K$.
\end{corollary}

\begin{remark}
	\textit{A priori},
	the statement of Corollary~\ref{cor:n=5-zero-2} can be recovered
	from the finiteness of the exceptional set $\Sigma_{2, 5}$ from
	Krumm~\cite{krumm-finiteness}, since the $30$ periodic points of
	exact period $5$ must then be conjugate
	(and therefore not quadratic)
	for all but finitely many rational $c$.
	For Corollary~\ref{cor:n=6-zero-2},
	Doyle--Faber--Krumm~\cite[Section~1.4]{doyle-faber-krumm} observed
	that the results of Stoll~\cite{n=6} actually imply that there
	are only finitely many pairs $(K, c)$ of quadratic number fields $K$
	and $c \in K$ such that $\phi_{2, c}$ has a periodic point of exact
	order $6$ in $K$.
\end{remark}

\subsection{Future directions}
If the {\gdr} can be
established for all $c \in \Q$ for given pairs $(d, N)$, then 
future developments in determining the structure of $C_0(N)(\Q)$ and
$C_1(N)(\Q)$ could lead to more general statements of
Poonen's conjecture (cf. \cite{poonen-conj}) for number fields.
It is not known if there are any tuples
$(\phi_{2, c}, N, K/\Q)$
that do not satisfy the {\gdr}, but
we show that $c$ must lie in the density $0$ subset
$\Sigma_{2, N}$ of the rationals for any such tuple.
There are two parallel directions that would directly extend
Theorem~\ref{thm:gdr-d2} and provide more cases of
the {\gdr}: further identifying the exceptional sets $\Sigma_{d, N}$
and direct verification of the {\gdr} for the exceptional cases
$c \in \Sigma_{d, N}$.

In the first direction (for $d=2$),
the program of Krumm~\cite{krumm-galois,
krumm-finiteness} in extending the results of Morton~\cite{morton-1992}
has yielded significant progress in determining the exceptional
sets $\Sigma_{2, N}$ for small $N$.
The finiteness methods developed by
Krumm are constrained by computational limitations
for larger $N$
(cf.~\cite[Section~9]{krumm-finiteness}).
However, further calculation of the exceptional sets
$\Sigma_{2, N}$ (``explicit Hilbert irreducibility'')
would yield immediate extensions of
Theorem~\ref{thm:gdr-d2} for $N > 4$.
In particular, Corollaries~\ref{cor:n=5-zero-2}
and \ref{cor:n=6-zero-2} would be immediately
refined by the explicit determination
of $\Sigma_{2, 5}$ and $\Sigma_{2, 6}$.

In the second direction, the techniques in
Section~\ref{sec:main-proof} for checking
the exceptional cases of $c \in \Sigma_{2, N}$ when
$N \leq 4$ have been ad-hoc and depend on explicit
factorizations of $\Phi_N(z, c)$ in $\Q[z]$.
Any case-by-case approach for the exception cases
may depend on progress in the first direction,
since an explicit description for
the exceptional set $\Sigma_{d, N}$ is not known
if $N > 4$ or $d > 2$. However, it is worth
pointing out that if $(\phi_{2, c}, 5, K/\Q)$ and
$(\phi_{2, c}, 6, K/\Q)$
satisfy the {\gdr} for all $c \in \Sigma_{2, 5}$
and $\Sigma_{2, 6}$ respectively, then Conjecture~\ref{con:n=5-6}
would be affirmatively resolved (assuming the standard conjectures
for $N=6$).

Outside of those two directions, the {\gdr}
can also be considered for broader classes of functions $\phi$.
For instance, the results of Section~\ref{subsec:exceptional-set}
for unicritical polynomials $\phi_{d, c}$
and their consequences
would also extend to any nonlinear polynomial $\phi(z)$ whose
associated dynatomic polynomial $\Phi_N(z, t)$ is known to be
irreducible over $\Q(t)$.
Furthermore, Definition~\ref{def:A} of the {\gdr}
is made in sufficiently generality that this program
can also be considered for dynamical systems over finite fields,
dynamical systems over function fields,
and more general dynamical systems on algebraic varieties.
There are, for instance, interesting calculations done by
by Bridy--Garton~\cite{bridy-garton} and
Krumm--Sutherland~\cite{krumm-sutherland} that could be
used for such analogues.


\section{Irreducibility}
\label{sec:crit}
\subsection{An irreducibility criterion}
\label{subsec:crit}
The dynamical mimicry of the action of the Galois group of
$\Phi_N$ was previously considered by Vivaldi--Hatjispyros for
any nonlinear polynomial $\phi$. They
established that if the dynatomic polynomial $\Phi_N(z)$ is
irreducible as a polynomial in $\Q[z]$ for a given $N$, then 
for each orbit $\O$ there is a nontrivial
subgroup $H_\O$ of the Galois group of $\Phi_N(z)$
that acts on $\O$ in the same way as iterates of
$\phi$ \cite[Section~3]{vh}. As a consequence of
the work of Vivaldi--Hatjispyros and the following general
fact, there is an 
irreducibility criterion for the {\gdr}.
\begin{lemma}
	\label{lem:intermediate-extensions}
	Let $\phi$ be a polynomial in $k[z]$,
	$N$ be an integer greater than $1$, and
	$K'$ be the splitting field of the dynatomic
	polynomial $\Phi_N$ over $k$.
	If $(\phi, N, K'/k)$ satisfies the {\gdr}, then
	$(\phi, N, K/k)$ satisfies the {\gdr} for
	all nontrivial Galois extensions $K/k$.
\end{lemma}

\begin{proof}
	First, we show that $(\phi, N, L/k)$	satisfies the {\gdr} for any
	nontrivial intermediate Galois extension $L$ of $K'/k$.
	If $\per_{N, L-k}(\phi)$ is empty, then $(\phi, N, L/k)$
	vacuously satisfies the {\gdr}, so we may assume that
	$\per_{N, L-k}(\phi)$ is nonempty.
	Let $z_0 \in \per_{N, L-k}(\phi) \subset \per_{N, K'-k}(\phi)$. 
	By the definition of the {\gdr} for $(\phi, N, K'/k, z_0)$,
	there is a positive integer $i < N$ and a nontrivial $\tau' \in
	\gal(K'/k)$ such that $\phi^i(z_0) = \tau' (z_0)$.
	Notice that $\tau'$ cannot be in the subgroup $\gal(K'/L)$,
	since $\tau'$ does not fix $z_0$, which is an element of $K$. Then
	$\tau'$ corresponds to a nontrivial element $\tau$ of the
	quotient group $\gal(L/k)$ such that $\phi^i(z_0) = \tau (z_0)$.
	Therefore $(\phi, N, L/k)$ satisfies the {\gdr} for any nontrivial
	intermediate Galois extension $L/k$ of $K'/k$.
	
	Now consider the arbitrary nontrivial Galois extension $K/k$, for
	which we can similarly assume that $\per_{N, K-k}(\phi)$ is
	nonempty. Let $z_0 \in \per_{N, K-k}(\phi)$.
	Since $z_0$ is a root of $\Phi_N(z)$, it is also
	contained in $\per_{N, K'-k}(\phi)$. For the
	intermediate Galois extension $(K \cap K')/k$, the triple
	$(\phi, N, (K \cap K')/k)$ satisfies the {\gdr} and $z_0 \in
	\per_{N, (K \cap K')-k}(\phi)$.
	Then there is a positive integer $i < N$ and
	a nontrivial $\sigma' \in \gal((K \cap K')/k)$
	such that $\phi^i(z_0) = \sigma' (z_0)$. 
	By taking a lift $\sigma \in \gal(K/k)$ of $\sigma'$,
	$(\phi, N, K/k)$ satisfies the {\gdr}.
\end{proof}

\begin{proposition}[Irreducibility criterion]
\label{prop:vh2}
	Let $\phi$ be any	nonlinear polynomial in $\Q[z]$,
	$N$ be an integer greater than $1$, and $K/\Q$ be
	a nontrivial Galois number field.
	If the dynatomic polynomial $\Phi_N(z)$ is
	irreducible in $\Q[z]$, then
	$(\phi, N, K/\Q)$ satisfies the {\gdr}.
\end{proposition}
\begin{proof}
	Let $K'$ be the splitting field of $\Phi_N$ over $\Q$.
	Vivaldi--Hatjispyros~\cite[Section~3]{vh} proved that if
	$\Phi_N(z)$ is irreducible in $\Q[z]$, then
	for each orbit $\O$ there is
	a nontrivial subgroup $H_\O$ of the Galois group of $\Phi_N(z)$
	that acts on $\O$ in the same way as iterates of
	$\phi$. Consider any periodic point $z_0 \in \per_{N, K'-\Q}(\phi)$
	and its orbit $\O$. Then
	there is a nontrivial $\sigma \in H_\O \subset \gal(K'/\Q)$
	such that $\sigma(z) = \phi^i(z)$ for some positive integer $i < N$,
	so $(\phi, N, K'/\Q)$ satisfies the {\gdr}.
	By Lemma~\ref{lem:intermediate-extensions},
	$(\phi, N, K/\Q)$ satisfies the {\gdr} as well.
\end{proof}

\begin{remark}
	The notion of an automorphism polynomial, which was defined by the
	contemporaneous study of Morton--Patel~\cite{morton-patel}
	for non-constant polynomials $\phi \in k[z]$, is also closely
	related to the {\gdr} when $\Phi_N(z)$ is irreducible in $k[z]$.
	If $\Phi_N(z)$ is irreducible in $k[z]$ and
	$\phi$ is an automorphism polynomial of $\Phi_N(z)$,
	then for $K'$ the splitting	field of $\Phi_N$ over $k$,
	$(\phi, N, K'/k)$ satisfies the {\gdr}
	with $i = 1$ for each periodic point $z_0$ of exact period $N$.
	By Lemma~\ref{lem:intermediate-extensions}, this is actually sufficient
	for	$(\phi, N, K/k)$ to satisfy the {\gdr} for any nontrivial
	Galois extension $K/k$.
\end{remark}

\subsection{Hilbert irreducibility and the exceptional set for unicritical polynomials}
\label{subsec:exceptional-set}
For unicritical polynomials $\phi_{d, c}$,
Vivaldi--Hatjispyros~\cite[Section~3]{vh} remarked that the dynatomic
polynomial $\Phi_N(z, c)$ being irreducible in $\Q[z]$ ``appears to be
the typical situation'',
meaning that it is reducible for $c$ in a density $0$ subset of $\Q$.
They proved their hypothesis for $d = 2$ and $N \leq 3$, but
nevertheless $\Phi_N(z, c)$ is not irreducible in $\Q[z]$ in
general. Even for $d = 2$,
Vivaldi--Hatjispyros demonstrated that
$\Phi_N(z, c)$ is never irreducible in $\Q[z]$
for an infinite family of $c$ when $N = 3$, for $c = -2$ when
$N \geq 3$, and for $c = 0$ when $2^N - 1$ is not a Mersenne prime.

To study the irreducibility of $\Phi_N(z, c) \in \Q[z]$ for
$c \in \Q$, we can view it as a specialization of $\Phi_N(z, t)
\in \Q[z, t]$. By the later results of
Bousch~\cite[Chapitre 3, Th\'{e}or\`{e}me 1]{bousch}
and Lau--Schleicher~\cite[Theorem~4.1]{lau-schleicher}
(cf. Morton~\cite[Corollary~1]{morton-irred},
Buff--Lei~\cite[Theorem~1.2]{buff-lei},
Gao--Ou~\cite[Theorem~1.2]{gao-ou}), it is known that 
$\Phi_N(z, t)$ is irreducible as a polynomial in $\C(t)[z]$.
Let $G_N$ denote the Galois group of $\Phi_N(z, t)$ over $\Q(t)$
and let $G_{N,c}$ denote the Galois group of its specialization
$\Phi_N(z, c)$ over $\Q$. Define $\Sigma_{d, N}$ to be the
locus of $c \in \P^1(\Q)$ such that $G_N \cong G_{N,c}$.
The thinness (in the sense of Serre) of
$\Sigma_{d, N}$ follows
immediately from an application of Hilbert's irreducibility theorem.
\begin{corollary}
	\label{cor:hilbert}
	For each integer $d$ and $N$ greater than $1$,
	$\Sigma_{d, N}$ is a thin subset (in the sense of Serre)
	of $\P^1(\Q)$ such
	that for all rational $c$ not in $\Sigma_{d, N}$,
	\begin{enumerate}[(a)]
		\item	$\Phi_N(z, c)$ is irreducible in $\Q[z]$, and
		\item $G_N \cong G_{N,c}$.
	\end{enumerate}
\end{corollary}

\begin{remark}
	Thin sets (in the sense of Serre) in $\P^1(\Q)$ have density $0$,
	as the number of points of a thin set in $\P^1(\Q)$ with naive height
	at most $H$ is $\mathrm{O}(H)$
	(cf. Serre~\cite[Proposition~3.4.2]{serre-galois}).
	Corollary~\ref{cor:hilbert} implies that, for all $N$,
	the set of rational $c$
	such that $\Phi_N(z, c)$ is reducible in $\Q[z]$ is a density $0$
	subset of $\Q$.
\end{remark}

When $\Phi_N(z, c)$ is irreducible in $\Q[z]$,
the Galois group $G_{N, c}$ acts transitively on the periodic
points of $\phi_{d, c}$ of period $N$.
We obtain the following useful statement as a
consequence of Proposition~\ref{prop:vh2} and
Corollary~\ref{cor:hilbert}.

\begin{proposition}
	\label{prop:galois-almost-2}
	For each integer $d$ and $N$ greater than $1$, there exists a
	thin set (in the sense of Serre) $\Sigma_{d, N}$ in $\P^1(\Q)$ such
	that for all rational $c$ not in $\Sigma_{d, N}$,
	\begin{enumerate}[(a)]
		\item the triple $(\phi_{d, c}, N, K/\Q)$
			satisfies the {\gdr} for all nontrivial Galois extensions
			$K/\Q$, and
		\item $G_N \cong G_{N,c}$.
	\end{enumerate}
\end{proposition}

\begin{remark}
	In fact, Corollary~\ref{cor:hilbert} (and therefore
	Proposition~\ref{prop:galois-almost-2}) is true for a larger class
	of polynomials than the unicritical polynomials $\phi_{d, c}$.
	By Morton~\cite[Theorem~B]{morton-irred}
	(cf. Morton~\cite[Theorem~B]{morton-galois}), the associated
	dynatomic polynomial $\Phi_N(z, t)$ is also irreducible in $\C(t)[z]$
	for polynomials $\phi \in \Z[z, t]$ of degree $d \geq 2$
	satisfying:
	\begin{enumerate}[(a)]
		\item $\phi(z, t^i)$ is homogeneous for some integer $i \geq 1$,
		\item $\phi(z, 0) = z^d$,
		\item $\phi(z, 1)$ has distinct roots, and
		\item the primitive $N$-bifurcation points of $\phi$ are distinct.
	\end{enumerate}
	This class of polynomials includes
	$\phi_{d, c} = z^d + c$ (by setting $c = -t^d$).
\end{remark}

Understanding when
$G_N \not\cong G_{N,c}$ determines the
exceptional set $\Sigma_{d, N}$.
On the one hand, the structure of the Galois group
$G_N$ of $\Phi_N(z, t)$ over $\Q(t)$ is well-understood.
Bousch~\cite[Chapitre 3]{bousch}
(c.f. \cite[Theorem~4.2]{morton-patel}) showed that the Galois group
$G_N$ of $\Phi_N(z, t)$ over $\Q(t)$ is isomorphic to a wreath
product
\[
	G_N \cong (\Z/N\Z) \wr S_r,
\]
\noindent
where $r$ is an integer such
that $rN = \deg \Phi_N$, for $\phi_{2,c}$. This result was extended
by Lau--Schleicher~\cite{lau-schleicher}
(cf. Morton~\cite{morton-galois})
for $\phi_{d, c}$ to all $d \geq 2$.

However, the structure of the Galois group
$G_{N,c}$ of the specialization $\Phi_N(z, c)$ over $\Q$ is not known
in general. For $d = 2$, one can show that $G_{N, c}$ is not
isomorphic to $G_N$ for any integer $N > 1$ if
$c = 0$ or $-2$ (i.e. that $0, -2 \in \Sigma_{2,N}$ for all $N > 1$)
since the Galois group $G_{N,c}$ is abelian for those
values of $c$.
Otherwise, there are only explicit descriptions of
$\Sigma_{2, N}$ for small $N$
due to Morton~\cite{morton-1992} and
Krumm~\cite{krumm-galois, krumm-finiteness}


\section{Quadratic points on dynatomic curves}
\label{sec:rational-points}

The study of periodic points of exact period $N$ in a number field
$K$ is related to the study of
$K$-points on the curves $C_1(N)$ and $C_0(N)$:
a pair $(z_0, \phi_{d, c})$ of
a map $\phi_{d, c}(z) = z^d + c$ with $c \in K$
and a periodic point $z_0 \in K$ of exact period $N$ of
$\phi_{d, c}$ gives rise to a $K$-point on $C_1(N)$,
and 
a pair $(\O, \phi_{d,c})$ of a map $\phi_{d,c}(z) =
z^d + c$ with $c \in K$ and a $\gal(\overline{K}/K)$-stable
$N$-cycle $\O$ gives rise to a $K$-point on $C_0(N)$.
The $K$-points on these curves contain
full information about periodic points in $K$.

The first observation about the {\gdr} is that we can specify
the dynamical indices corresponding to the Galois action.
The following elementary lemma is true
in greater generality for any rational map $\phi$ and
any nontrivial finite Galois extension $K/k$, but we state
it in the unicritical polynomial and number field setting.

\begin{lemma}
  \label{lem:galois}
	Let $K$ be a nontrivial finite Galois extension of $\Q$ of	
	degree $D$, let $d$ and $N$ be integers greater than $1$,
	let $c \in \Q$, and denote $g := \gcd(N, D)$.
	Let $z_0 \in K - \Q$ be a periodic point
	of $\phi_{d,c}$ with exact period $N$.
	
	If there exists a nontrivial $\t \in \gal(K/\Q)$ and a
	positive integer $i < N$ such that
	\[
		\t z_0 = \phi_{d,c}^i(z_0),
	\]
	then $g > 1$ and $i = \frac{mN}{g}$ for some integer $m$
	such that $1 \leq m \leq g - 1$.
\end{lemma}

\begin{proof}
	Suppose that there exists a nontrivial $\t \in \gal(K/\Q)$ such that
	$\phi_{d, c}^i(z_0)= \t(z_0)$ for some $i \in
	\{1, \ldots, N - 1\}$. Since
	$\t$ commutes with $\phi_{d, c}$ and the action of $\phi_{d, c}$
	is transitive on the $N$-cycle, we have that $\t \equiv
	\phi_{d, c}^i$ on the entire cycle.

  Since $K$ is Galois, the order of the Galois group
  $\gal(K/\Q)$ is $D$. Thus,
  \[
		z_0 = \t^D(z_0) = (\phi_{d, c}^i)^D(z_0) = \phi_{d, c}^{iD}(z_0).
  \]
	Since $z_0$ has exact period $N$, it follows that
	$N$ divides $iD$ and $i$ is a multiple of $\frac{N}{g}$.
	In particular, there is a contradiction if $g = 1$
	since $0 < i < N$ by assumption.
\end{proof}

\begin{remark}
	In other words, $\phi_{d, c}$ satisfies the {\gdr}
	for $N$, $K$, and $z_0 \in K - \Q$ if and only if there exists a
	nontrivial $\t \in \gal(K/\Q)$ and $m \in \{1, \ldots, g - 1\}$
	such that $\phi_{d,c}^{\frac{mN}{g}}(z_0) = \t z_0$ (this fact
	becomes convenient for showing Lemma~\ref{lem:n=4}).
	In particular,
	a necessary condition for $(\phi_{d, c}, N, K/\Q, z_0)$ to satisfy
	the {\gdr} is that $N$ and $[K:\Q]$ are not coprime.
\end{remark}

If $K$ is a quadratic number field, then we can use
Lemma~\ref{lem:galois} to make the following
observation.

\begin{lemma}
\label{lem:rational-points}
	Let $K$ be a quadratic number field, $c$ be a rational number,
	and $d$ and $N$ be integers greater
	than $1$. If $(\phi_{d, c}, N, K/\Q)$ satisfies the
	{\gdr}, then:
	
	\begin{enumerate}[(a)]
		\item For any $N$-cycle $\{z_0, \ldots z_{N-1}\}$ of $\phi_{d, c}$
			in $K - \Q$, its trace $\sum z_i$ is rational; \\
		\item Furthermore, each $N$-periodic point $z_i$ is rational
			if $N$ is odd.
	\end{enumerate}
\end{lemma}
\begin{proof}
	(a): If $N$ is even, the $N$-cycle consists of pairs of Galois
	conjugates $(z_i, z_{i + \frac{N}{2}})$ by Lemma~\ref{lem:galois}.
	Then the
	trace $\sum_{i = 0}^{N-1} z_i$ must
	lie in $\Q$.

	(b): If $N$ is \textit{odd}, then the set of periodic
	points $\per_{N, K-\Q}(\phi_{d, c})$ must be empty because
	otherwise, $i \in \{1, \ldots, N - 1\}$ would be a multiple of $N$ by
	virtue of the fact that $g = \gcd(N, 2) = 1$.
	Hence any periodic point of $\phi_{d, c}$ of odd exact period $N$ 
	contained in $K$ must itself be rational.
\end{proof}

\begin{remark}
	Effectively, Lemma~\ref{lem:rational-points} is the observation
	that when $K$ is a quadratic extension, the {\gdr}
	says that the nontrivial automorphism of $K$ setwise fixes
	$N$-cycles in $K$. This forces the trace to be rational and induces
	an involution on each $N$-cycle in $K$, which must have a fixed
	point when $N$ is odd.
\end{remark}

\begin{remark}
	Lemma~\ref{lem:rational-points}(a) can be restated as saying that
	if $c \in \Q$ and $(\phi_{d, c}, N, K)$ satisfies the {\gdr},
	then the fiber above $c$ on $C_1(N)(K)$ maps to a point
	in $C_0(N)(\Q)$. Lemma~\ref{lem:rational-points}(b) can be
	restated as saying that if furthermore $N$ is odd, then
	the fiber above $c$ on $C_1(N)(K)$ is actually contained in
	$C_1(N)(\Q)$.	
\end{remark}

Combined with existing results on the rational points of
dynatomic curves, Lemma~\ref{lem:rational-points} immediately yields
the connection between the {\gdr} and Conjecture~\ref{con:n=5-6}.

\begin{corollary}
  \label{cor:n=5-zero}
	Let $K$ be a quadratic number field and $c$ be a rational number. If
	$(\phi_{2, c}, 5, K)$ satisfies the {\gdr}, then $\phi_{2, c}$ has
	no periodic points of exact period $5$ in $K$.
\end{corollary}

\begin{proof}
	The parameter $c$ and the trace of an $N$-cycle give coordinates
	on $C_0(N)$.
	Since we assume that $c$ is a rational number, it is
	sufficient to show that the trace of an $N$-cycle is rational
	to demonstrate that an $N$-cycle corresponds to a point in
	$C_0(N)(\Q)$.
	
	By Lemma~\ref{lem:rational-points}, a $5$-cycle of
	$\phi_{2,c}$ necessarily has rational trace and
	corresponds to a $\Q$-point on $C_0(5)$.
	By the work of
	Flynn--Poonen--Schaefer~\cite{n=5}, the genus $2$
	dynatomic curve $C_0(5)$ has only six rational
	points, three of which correspond to $c = \infty$ and three of which
	ultimately correspond to $5$-cycles in quintic cyclic extensions of
	$\Q$.
\end{proof}

\begin{corollary}
  \label{cor:n=6-zero}
	Let $J$ be the Jacobian of $C_0(6)$. Suppose that
	the $L$-series $L(J, s)$ extends to an entire function,
	$L(J, s)$ satisfies the standard functional equation, and the weak
	Birch and Swinnerton-Dyer conjecture is valid for $J$.
	
	Let $K$ be a quadratic number field and $c$ be a rational number. If
	$(\phi_{2, c}, 6, K)$ satisfies the {\gdr}, then $\phi_{2, c}$ has
	no periodic points of exact period $6$ in $K$,
	unless $K = \Q(\sqrt{33})$ and
	$c = -\frac{71}{48}$ in which case there is exactly one $6$-cycle:
	\[
  \label{eq:6-cycle}
		z_0 = -1 + \frac{\sqrt{33}}{12},\,
		z_1 = -\frac{1}{4} - \frac{\sqrt{33}}{6},\,
		z_2 = -\frac{1}{2} + \frac{\sqrt{33}}{12},\,
		z_{i+3} = \sigma (z_{i}),
	\]
	where $\sigma$ is the nontrivial element of $\gal(K/\Q)$.
\end{corollary}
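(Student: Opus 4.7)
The plan is to leverage the identification $C_0(6)(K) = C_0(6)(\Q)$ supplied by Proposition~\ref{prop:application} together with Morton's conditional classification of $C_0(6)(\Q)$.

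First, I would apply Proposition~\ref{prop:application} to $(2, c, 6, K)$: since Property A holds and $K$ is quadratic, we immediately obtain $C_0(6)(K) = C_0(6)(\Q)$. A periodic point $z \in K$ of exact period $6$ produces a $6$-cycle $\{z, \phi_{2, c}(z), \ldots, \phi_{2, c}^5(z)\}$ that is stable under $\gal(K/\Q)$, and therefore gives a $K$-point, hence a $\Q$-point, of $C_0(6)$. Note that this does not force the individual $z_i$ to be rational, only the unordered cycle; this is why the corollary needs to allow for a genuinely quadratic exceptional case.

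Next I would invoke Morton's conditional theorem \cite{n=6}: under the assumed analytic continuation and functional equation for $L(J, s)$ together with the weak Birch--Swinnerton-Dyer conjecture for $J$, the set $C_0(6)(\Q)$ reduces to cusps together with a single noncuspidal rational point, which parameterizes $c = -\frac{71}{48}$. Cusps do not correspond to honest $6$-cycles, so the only candidate parameter is $c = -\frac{71}{48}$. For this value I would verify by direct computation that the $6$-cycle displayed in the statement is a genuine period-$6$ orbit of $\phi_{2, -71/48}$ lying in $\Q(\sqrt{33})$, with pairwise distinct $z_i$, and that the relation $z_{i+3} = \overline{z_i}$ identifies the nontrivial element of $\gal(\Q(\sqrt{33})/\Q)$ with the action of $\phi_{2, -71/48}^3$ on this cycle—precisely the pattern predicted by Definition~\ref{def:A}.

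The main obstacle is not mathematical depth, since Proposition~\ref{prop:application} and Morton's theorem supply the heavy machinery. Rather, the care lies in correctly interpreting Morton's list of rational points on $C_0(6)$: one must confirm that every other rational point is either a cusp or corresponds to a degenerate parameter admitting no nondegenerate $6$-cycle over any quadratic extension, and that the exceptional point yields exactly the listed $6$-cycle and no other over a quadratic field. Given the explicit nature of Morton's description, this reduces to a finite check.
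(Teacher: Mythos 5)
Your proposal matches the paper's (implicit) argument exactly: combine Proposition~\ref{prop:application}, which gives $C_0(6)(K)=C_0(6)(\Q)$ for quadratic $K$, with Morton's conditional classification of $C_0(6)(\Q)$ in~\cite{n=6}, then check the single exceptional parameter $c=-\frac{71}{48}$ directly. This is the same route the paper takes, so no further comment is needed.
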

\begin{proof}
	By Lemma~\ref{lem:rational-points}, the trace of a
	$6$-cycle of $\phi_{2,c}$ in a quadratic number field $K$ is
	rational so it would
	necessarily correspond to a point in $C_0(6)(\Q)$.
	By the work of Stoll~\cite{n=6}, which is dependent on the
	usual conjectures for $L(J, s)$, there are only ten points
	in $C_0(6)(\Q)$. Five of the points correspond to cusps on
	$C_1(6)$ and the other five points correspond to
	explicitly-described $6$-cycles; the
	following cycle with $c = -\frac{71}{48}$ and $K = \Q(\sqrt{33})$
	is the only one defined over a quadratic field:
	\[
		z_0 = -1 + \frac{\sqrt{33}}{12},\,
		z_1 = -\frac{1}{4} - \frac{\sqrt{33}}{6},\,
		z_2 = -\frac{1}{2} + \frac{\sqrt{33}}{12},\,
		z_{i+3} = \sigma (z_{i}),
	\]
	where $\sigma$ is the nontrivial element of $\gal(K/\Q)$.
\end{proof}

\begin{remark}
	In the situations of Corollaries~\ref{cor:n=5-zero} and
	\ref{cor:n=6-zero}, the irreducibility criterion of
	Proposition~\ref{prop:vh} is not sufficient. For example,
	$\Phi_6(z, c)$ is reducible as a
	polynomial in $\Q[z]$ when $c = -2$.
\end{remark}

\begin{remark}
	Corollaries~\ref{cor:n=5-zero-2} and \ref{cor:n=6-zero-2}
	follow immediately from the application of Theorem~\ref{thm:gdr-d2}
	to Corollaries~\ref{cor:n=5-zero} and \ref{cor:n=6-zero}
\end{remark}


\section{The proof of Theorem~\ref{thm:gdr-d2}}
\label{sec:main-proof}

In Theorem~\ref{thm:gdr-d2}, we want to show
that the quadratic polynomial $(\phi_{2, c}, N, K/\Q)$
satisfies the {\gdr} for all nontrivial Galois extensions
$K/\Q$ with various $N \leq 9$ and $c \in \Q$.
For the proof, we 
proceed case-by-case for varying $N$.
For each $N$, we first use Proposition~\ref{prop:galois-almost-2}
and descriptions of $\Sigma_{2, N}$ to show that
$(\phi_{2, c}, N, K/\Q)$ satisfies the {\gdr}
when $c \in \Q - \Sigma_{2, N}$, before then
directly checking the $c \in \Sigma_{2, N}$.

\subsection{\texorpdfstring{Period $2$}{Period 2}}
\begin{lemma}
	\label{lem:n=2}
	The tuple $(\phi_{2, c}, 2, K/\Q)$ satisfies the
	{\gdr} for all nontrivial Galois number fields $K/\Q$ and all
	$c \in \Q$.
\end{lemma}
\begin{proof}
	For $N = 2$, there can only be at most one $2$-cycle of
	$\phi_{2, c}$ for a given $c \in \Q$ since $\Phi_2(z, c)$ is a
	degree $2$ polynomial in $\Q[z]$. Therefore either $\Phi_2(z, c)$ is
	irreducible or the periodic points are rational.
\end{proof}

\subsection{\texorpdfstring{Period $3$}{Period 3}}

\begin{lemma}
	\label{lem:n=3}
	The tuple $(\phi_{2, c}, 3, K/\Q)$ satisfies the
	{\gdr} for all nontrivial Galois number fields $K/\Q$ and all
	$c \in \Q$.
\end{lemma}

\begin{proof}
	The dynatomic polynomial $\Phi_3(z, c)
	\in \Q[z]$ for a given $c \in \Q$
	can either be irreducible or factor into two (possibly reducible)
	cubic factors
	(cf. Vivaldi--Hatjispyros~\cite[Section~2]{vh}
	and Morton~\cite[Theorem~3]{morton-1992}).
	By Proposition~\ref{prop:vh}, $(\phi_{2, c}, 3, K/\Q)$ satisfies the
	{\gdr} for any Galois number field $K/\Q$ if $\Phi_3(z, c)$ is
	irreducible in $\Q[z]$, so we can assume that
	$\Phi_3(z, c)$ factors into two cubic factors.

	Any linear factor of $\Phi_3(z, c)$ corresponds to a periodic
	point in $\Q$.
	A $3$-cycle of $\phi_{2, c}$ containing a rational number must
	necessarily be entirely contained in $\Q$, so
	$\phi_{2, c}$ has a $3$-cycle entirely
	contained in $\Q$. We can disregard such a $3$-cycle,
	since the {\gdr} only concerns irrational periodic points.
	
	For any irreducible cubic factor of $\Phi_3(z, c)$,
	two of its roots must lie in the same $3$-cycle.
	Then $\phi_{2, c}$ satisfies the
	{\gdr} for that $3$-cycle and any Galois number field $K/\Q$
	containing it since the $3$-cycle is not disjoint
	from its $\sigma$-conjugates for some nontrivial
	$\sigma \in \gal(K/\Q)$. Alternatively, observe that
	if $\phi_{2, c}(z) = \sigma (z)$ then
	\[
		\phi_{2, c}^2(z) = \phi_{2, c}(\sigma (z)) = \sigma \phi_{2, c}(z)
		= \sigma^2 (z).
	\]
\end{proof}

\begin{remark}
	Walde--Russo \cite[Corollary~2]{walde-russo} and
	Vivaldi--Hatjispyros \cite[Section~5]{vh} showed that the Galois
	group of any irreducible cubic factor of $\Phi_3(z, c)$ is 
	isomorphic to $A_3$.	
\end{remark}

\begin{remark}
	Morton~\cite[Theorem 8]{morton-1992} showed that
	\[
		\Sigma_{2,3} = \set{0, -\frac{7}{2}} \cup
			\set{-\frac{r^3 + 29r^2 + 243r + 559}
			{16(r + 7)(r + 11)} \middle| r \in \Q - \set{-7, -11}} \cup
			\set{-\frac{s^2 + 7}{4} \middle| s \in \Q}.
	\]
	In fact, Morton showed that while $G_{N, c}$ is not isomorphic
	to the full wreath product when
	$c = 0, -\frac{7}{2}$, or $-\frac{r^3 + 29r^2 + 243r + 559}
	{16(r + 7)(r + 11)}$ for any $r \in \Q - \set{-7, -11}$,
	the dynatomic polynomial $\Phi_3(z, c)$ is
	still irreducible in these cases.
	This demonstrates that $G_{N, c} \cong G_N$
	is not a necessary condition for
	$(\phi_{2, c}, 3, K/\Q)$ to satisfy the {\gdr}.
	In fact, we did not need to know anything about $\Sigma_{2, 3}$
	to prove Lemma~\ref{lem:n=3}
\end{remark}

\subsection{\texorpdfstring{Period $4$}{Period 4}}

\begin{lemma}
\label{lem:n=4}
	The tuple $(\phi_{2, c}, 4, K/\Q)$ satisfies the
	{\gdr} for Galois number fields $K$ and $c \in \Q$
	in the following cases:
	\begin{enumerate}[(a)]
		\item if $[K:\Q] = 2$, or
		\item if $[K:\Q] > 2$ and $c \notin
			\set{\frac{-s^3 - 2s + 4}{4s} \middle| s \in \Q^\times}$.
	\end{enumerate}
\end{lemma}

\begin{proof}
	(a): There is an explicit complex parametrization of $4$-cycles
	of $\phi_{2, c}$ due to Netto~\cite{netto},
	Morton~\cite{n=4}, and
	Erkama~\cite{erkama}. 
	Panraksa~\cite[Theorem~1.5.1.]{panraksa} used this parametrization
	to study the
	factorization of $\Phi_4(z, c)$ and prove that for a quadratic
	number field $K$, $c \in \Q$, and a $4$-cycle
	$\{z_0, z_1, z_2, z_3\} \subset K$ of $\phi_{2, c}$,
	$z_0$ and $z_2$ are Galois conjugates. Therefore,
	$(\phi_{2, c}, 4, K/\Q, z_0)$ satisfies the 
	{\gdr}. But $z_0$ is an arbitrary member of an arbitrary $4$-cycle,
	so $(\phi_{2, c}, 4, K/\Q)$ satisfies the {\gdr}.
	
	(b): Krumm~\cite[Proposition~4.8]{krumm-galois} showed that
	\[
		\Sigma_{2,4} = \set{-\frac{5}{2}} \cup
			\set{\frac{s^2 + 2s - 4}{8s} \middle| s \in \Q^\times} \cup
			\set{\frac{-s^3 - 3s + 4}{4s} \middle| s \in \Q^\times},
	\]
	with explicit presentations of the exceptional Galois groups as
	subgroups of $S_{12}$. In fact, $\Phi_4(z, c)$ is still irreducible
	when $c \in \set{-\frac{5}{2}} \cup
	\set{\frac{s^2 + 2s - 4}{8s} \middle| s \in \Q^\times}$, even though
	it generates a different Galois group.
	By Proposition~\ref{prop:vh}, $(\phi_{2, c}, 4, K/\Q)$ still
	satisfies the {\gdr} in those cases.
\end{proof}

\begin{remark}
	The factorization types of $\Phi_4(z, c)$ with $c \in \Q$
	are explicitly known
	due to Morton~\cite[Theorem~4]{n=4},
	Panraksa~\cite[Theorem~2.3.5.]{panraksa},
	and Krumm~\cite[Theorem 1.2]{krumm-galois}.
	In particular, if
	$c \in \set{\frac{-s^3 - 3s + 4}{4s} \middle| s \in \Q^\times}$
	then $\Phi_4(z, c)$ factors either into
	an irreducible degree $8$ polynomial and an irreducible degree $4$
	polynomial, or an irreducible degree $8$ polynomial and two
	irreducible quadratic polynomials.
\end{remark}

\subsection{\texorpdfstring{Period $5$, $6$, $7$, and $9$}{Period 5, 6, 7, and 9}}

\begin{lemma}
\label{lem:n-large}
	The tuple $(\phi_{2, c}, N, K/\Q)$ satisfies the
	{\gdr} for all nontrivial Galois number fields $K$ and for all but
	finitely many $c \in \Q$.
\end{lemma}

\begin{proof}
	For $N \in \{5, 6, 7, 9\}$, the exceptional set $\Sigma_{2,N}$
	is finite due to Krumm~\cite{krumm-finiteness}.
	Since $\Phi_N(z, c)$ is necessarily irreducible for $c \notin
	\Sigma_{2, N}$, we are done by Proposition~\ref{prop:vh}.
\end{proof}

\begin{remark}
	While an explicit description of $\Sigma_{2,N}$ is not
	yet known, Krumm~\cite[Section~9]{krumm-finiteness}
	provides an algorithm for computing the elements of
	$\Sigma_{2,N}$ of bounded naive height.
\end{remark}

\begin{remark}
	For $d = 2$ and $N \geq 5$, it is not known whether any class of
	pairs $(\phi_{2, c}, N)$ satisfies the {\gdr} besides
	those satisfying the irreducibility criteria
	and those explicitly found by
	computational searches for periodic points of exact period $5$ and
	$6$ \cite{n=5, hutz-ingram}. Even when $K/\Q$ is required to be
	quadratic, there
	is an active field of research on 
	understanding the structure of the dynamics of $\phi_{2, c}$
	(cf. \cite{doyle-small, doyle-curves, doyle-cyclotomic, doyle-faber-krumm, hutz-ingram, krumm}).
\end{remark}


\section{Acknowledgments}
The author would like to express thanks to Niccol\`{o} Ronchetti
for introducing him to the field of arithmetic dynamics
and to Zhiming Wang for the many preliminary conversations and
projects that helped shape this project. The author is also grateful
to Brian Conrad, Xander Faber, and John Doyle for
numerous suggestions and clarifications. The author is also
thankful to the anonymous referees for pointing out
mistakes in earlier versions of this paper and for
several insightful comments.

This work is based on a project that began under the
support of the Office of the Vice
Provost for Undergraduate Education of Stanford University and
was later continued under the support of the
National Science Foundation Graduate Research
Fellowship Program under Grant No. DGE-1644869. Any opinions,
findings, and conclusions or recommendations expressed in this
material are those of the author and do not necessarily reflect 
views of the National Science Foundation.

\bibliography{bibliography}{}
\bibliographystyle{alpha}

\end{document}